\newtheorem {theorem} {Theorem}
\newtheorem {proposition} [theorem]{Proposition}
\newtheorem{example}[theorem]{Example}
\begin{document}
\title[Some Remarks on a Generalized Vector Product]{Some Remarks on a Generalized Vector Product}
\author[P. Acosta-Hum\'anez]{Primitivo B. Acosta-Humánez}
\author[M. Aranda]{Moisés Aranda}
\author[R. N\'uñez]{Reinaldo Núñez}
\maketitle

\noindent\textbf{\footnotesize{Abstract.}} \footnotesize{In this paper we use a generalized
vector product to construct an exterior form $\wedge :\left( \mathbb{R}^{n}\right) ^{k}\rightarrow
\mathbb{R}^{\binom{n}{k}}$, where $\binom{n}{k}=\frac{n!}{(n-k)!k!}$, $k\leq n$. Finally, for $n=k-1$ we introduce the reversing operation to study this generalized vector product over palindromic and antipalindromic vectors.}\\

\noindent\textit{\footnotesize{MSC 2010.}}\footnotesize{ Primary 15A75, Secondary, 15A72}\\

\noindent\textit{\footnotesize{Keywords and Phrases.}} \footnotesize{Alternating multilinear function, antipalindromic vector, exterior product, palindromic vector, reversing, vector product}

\section*{Introduction}

It is well known that the vector product over $\mathbb{R}^{3}$ is an alternating $2$-linear function from $\mathbb{R}^{3}\times\mathbb{R}^{3}$ onto $\mathbb{R}^{3}$. Although this vector product is a natural topic to be studied in any course of basic linear algebra, there is a plenty of textbooks on this subject in where it is not considered over $\mathbb{R}^{n}$. The following definition, with interesting remarks, can be found also in \cite{Aranda-Nunez,Marmolejo,Olivert}. Let $$A_{1}=\left( a_{11},a_{12},\ldots ,a_{1n}\right) ,\ldots,\, A_{n-1}=\left( a_{\left( n-1\right) 1},a_{\left( n-1\right) 2},\ldots
,a_{\left( n-1\right) n}\right) $$ be $n-1$ vectors in $\mathbb{R}^{n}$. The vector product over $\mathbb{R}^{n}$ is a function $\times :\left(\mathbb{R}^{n}\right) ^{n-1}\rightarrow\mathbb{R}^{n}$ such that
\begin{equation}\label{vectprod}
\times \left( A_{1},\text{ }A_{2},\text{ }\ldots ,\text{ }A_{n-1}\right)
=A_{1}\times A_{2}\times \cdots \times A_{n-1}=\sum_{k=1}^{n}\left(
-1\right) ^{1+k}\det \left( X_{k}\right) e_{k},
\end{equation} where $e_{k}$ is the $k-$th unity vector of the standard basis of $\mathbb{R}^{n}$ and $X_{k}$ is the square matrix obtained through the deleting of the $k-$th
column of the $\left( a_{ij}\right) _{\left( n-1\right) \times n}$. Notice that in this case the function is not binary and sends a matrix $M$ of size $(n-1)\times n$ to a vector of its $\binom{n}{n-1}$ maximal minors.

 One aim of this paper is to give an algorithm to construct, using elementary techniques, a function with domain in $\left(\mathbb{R}^{n}\right) ^{k}$ and codomain $\mathbb{R}^{\binom{n}{k}}$ which will be an alternating $k$-linear function that obviously generalizes the previous vector product defined over $\mathbb{R}^{n}$.
 
  Using techniques and methods of algebraic geometry we can see that the vector product obtained here, without signs, corresponds to the \emph{Pl\"ucker coordinates} of the matrix $M$, see \cite{harris,hodge}. Although this vector product is known and useful to define the concept of \emph{Grassmanian variety}, see \cite{harris}, we present an alternative construction, avoiding algebraic geometry, which lead us to known results that can be found as for example in \cite{Lang}.
 
Another aim of this work, following \cite{acchro1,acchro2}, is the presentation of some original results concerning to the vector product for $n=k-1$ in palindromic and antipalindromic vectors by means of \emph{reversing operation}.

The way as is presented this paper can allow to students and teachers of basic linear algebra the implementation of these results on their courses, this is our final aim.

\section{A generalized vector product}
In this section we set some preliminaries, properties and the Cramer´s rule as application of the generalized vector product.
\subsection{Preliminaries}
Following \cite{Aranda-Nunez,Marmolejo} we define the generalized vector product over $\mathbb{R}^{n}$ as the function $$\times :\left(\mathbb{R}
^{n}\right) ^{n-1}\rightarrow\mathbb{R}^{n}$$ such that for  $A_{1}=\left( a_{11},a_{12},\ldots ,a_{1n}\right) ,\ldots
,A_{n-1}=\left( a_{\left( n-1\right) 1},a_{\left( n-1\right) 2},\ldots
,a_{\left( n-1\right) n}\right) $, $n-1$ vectors of $\mathbb{R}^{n}$, their vector product is given by
\begin{equation}\label{gvpeq}
\times \left( A_{1},\text{ }A_{2},\text{ }\ldots ,\text{ }A_{n-1}\right)
=A_{1}\times A_{2}\times \cdots \times A_{n-1}=\sum_{k=1}^{n}\left(
-1\right) ^{1+k}\det \left( X_{k}\right) e_{k},
\end{equation}
where $e_{k}$ is the $k-$th element of the canonical basis for $\mathbb{R}^{n}$ and $X_{k}$ is the square matrix obtained after the elimination of the $k-$th column of the matrix $\left( a_{ij}\right) _{\left( n-1\right) \times n}
$. The definition presented in expression \eqref{gvpeq} corresponds to a natural generalization of the vector product of two vectors belonging to $\mathbb{R}^3$.
 \subsection{Some properties}
Let $A_{1},A_{2},\ldots ,A_{n}$ be vectors of $\mathbb{R}^{n}$. The following statements hold.
\begin{enumerate}
\item[1)] $\times \left( A_{1},\text{ \ }A_{2},\text{ }\ldots ,\text{ }%
A_{n-1}\right)$ is an orthogonal vector for the given vectors.

\item[2)] Assume $\alpha,\beta\in \mathbb{R}$, $B_i\in \mathbb{R}^n$:
\begin{eqnarray*}
A_{1}\times A_{2}\times \cdots \times \left( \alpha A_{i}+\beta B_{i}\right)
\times \cdots \times A_{n-1} = \\ A_{1}\times A_{2}\times \cdots \times \alpha
A_{i}\times \cdots \times A_{n-1} +A_{1}\times A_{2}\times \cdots \times \beta B_{i}\times \cdots \times
A_{n-1}.
\end{eqnarray*}

\item[3)] Let the matrix $A$ given by $A= \left( A_{1},A_{2},\ldots ,A_{n}\right)$:
\begin{equation*}
\det A=A_{1}\cdot \left( A_{2}\times
\cdots \times A_{n}\right)=\left( -1\right)
^{1+j}A_{j}\cdot \left( A_{1}\times \cdots \times A_{j-1}\times
A_{j+1}\times A_{n}\right) .
\end{equation*}

\item[4)] The vectors $A_{1},A_{2},\ldots ,A_{n-1}$ are $n-1$ linearly dependent vectors for $
\mathbb{R}^{n}$ if and only if $A_{1}\times A_{2}\times \cdots
\times A_{n-1}=0$.
\end{enumerate}
It is well known that these properties can be proven using the properties of the determinant, see for example \cite{Aranda-Nunez,Marmolejo}.

\subsection{Cramer's rule}

Consider the following system of linear equations
\begin{eqnarray*}
a_{11}x_{1}+a_{12}x_{2}+\cdots +a_{1n}x_{n} &=&b_{1} \\
a_{21}x_{1}+a_{22}x_{2}+\cdots +a_{2n}x_{n} &=&b_{2} \\
&&\vdots  \\
a_{n1}x_{1}+a_{n2}x_{2}+\cdots +a_{nn}x_{n} &=&b_{n}
\end{eqnarray*}
that can be expressed in vectorial way as
\begin{equation}\label{cramer}
x_{1}A_{1}+x_{2}A_{2}+\cdots +x_{n}A_{n}=B,
\end{equation}
being $A_{i}=\left( a_{1i},a_{2i},\ldots ,a_{ni}\right) $ with $i=1,2,\ldots ,n$ and $B=\left( b_{1},b_{2},\ldots ,b_{n}\right) $. Suppose that $\det \left( A_{1},A_{2},\ldots ,A_{n}\right) \neq 0$. For instance such system has unique solution that can be obtained applying the scalar product between the equation \eqref{cramer} and $A_{2}\times A_{3}\times\cdots \times A_{n}$, so we obtain
\begin{eqnarray*}
\left( x_{1}A_{1}+x_{2}A_{2}+\cdots +x_{n}A_{n}\right) \cdot A_{2}\times
A_{3}\times \cdots \times A_{n} &=&B\cdot A_{2}\times A_{3}\times \cdots
\times A_{n} \\
x_{1}A_{1}\cdot A_{2}\times A_{3}\times \cdots \times A_{n} &=&B\cdot
A_{2}\times A_{3}\times \cdots \times A_{n}
\end{eqnarray*}

since $A_{j}\cdot A_{2}\times A_{3}\times \cdots \times A_{n}=0$ for $j=2,3,\ldots ,n$. Therefore
\begin{equation}
x_{1}=\frac{B\cdot A_{2}\times A_{3}\times \cdots \times A_{n}}{A_{1}\cdot
A_{2}\times A_{3}\times \cdots \times A_{n}}=\frac{\det \left(
B,A_{2},A_{3},\cdots ,A_{n}\right) }{\det (A_{1},A_{2},A_{3},\cdots ,A_{n})}.
\end{equation}

In a general way, we can obtain
\begin{eqnarray*}
x_{i} &=&\frac{B\cdot A_{1}\times A_{2}\times \cdots \times A_{i-1}\times
A_{i+1}\times \cdots \times A_{n}}{A_{i}\cdot A_{1}\times A_{2}\times \cdots
A_{i-1}\times A_{i+1}\times \cdots \times A_{n}} \\
&=&\frac{\left( -1\right) ^{i+1}\det \left( A_{1},A_{2},\ldots
,A_{i-1},B,A_{i+1},\cdots ,A_{n}\right) }{\left( -1\right) ^{i+1}\det
(A_{1},A_{2},A_{3},\cdots ,A_{n})} \\
&=&\frac{\det \left( A_{1},A_{2},\ldots ,A_{i-1},B,A_{i+1},\cdots
,A_{n}\right) }{\det (A_{1},A_{2},A_{3},\cdots ,A_{n})},
\end{eqnarray*}
that is, the well-known \emph{Cramer's rule}.

\section{Didactic way to define $\wedge$: algorithm and properties}
In this section we propose a didactic way to define the exterior product $\wedge$. To do this, we set an algorithm to the construction of $\wedge$ and as consequence of this construction arise some properties.
\subsection{Algorithm to the construction of $\wedge$}

Here we present an algorithm and some simple examples to illustrate it.

\subsubsection*{Step 1}

Consider $n\in\mathbb{N}$ and $1\leq k\leq n$, being $k$ an integer. We define
\begin{equation*}
I=\left\{ i_{1}i_{2}\cdots i_{k}:1\leq i_{1}<i_{2}<\cdots <i_{k}\leq
n\right\} ,
\end{equation*} this means that the elements belonging to $I$ are chains of numbers conformed in agreement with the lexicographic order.

\begin{example}\label{ex1}
For $n=5$ and $k=3$ we have
\begin{equation*}
I=\left\{ 123,\text{ }124,\text{ }125,\text{ }134,\text{ }%
135,145,234,235,245,345\right\}.
\end{equation*}
As we can see, $\#I=\binom{n}{k}=\binom{5}{3}=10$.
\end{example}

\begin{example}\label{ex2}
For $n=5$ and $k=2$, we obtain $\binom{5}{2}=10$ and for instance $I$ is given by
\begin{equation*}
I=\left\{ 12,13,14,15,23,24,25,34,35,45\right\} .
\end{equation*}
\end{example}

\subsubsection*{Step 2}

We set that $I$ should be ordered lexicographically.
\begin{equation*}
I_{\left( 1\right) }<I_{\left( 2\right) }<\cdots <I_{\left( \binom{n}{k}%
\right) }
\end{equation*}%
In this way, if $I_{s}\in I$, then there exists $p$ (only one) such that $I_{s}=I_{( p) }$. Thus, we can define $p$ as the \textit{rank} of $I_{s}$ and will be denoted by $r\left( I_{s}\right) =p$. That is, $p$ is the place of $I_{s}$ in $I$ as set of ordered elements lexicographically.

In Example \ref{ex1} we can see that $r\left( 234\right) =7$, $r\left( 345\right) =10$. The same for Example \ref{ex2}, $r\left( 25\right) =7$, $r\left( 35\right) =9$.

\subsubsection*{Step 3}

Let $u_{1}=\left( u_{11},u_{12},\ldots ,u_{1n}\right),  \ldots ,
u_{k}=\left( u_{k1},u_{k2},\ldots ,u_{kn}\right) $, be $k$ vectors of $\mathbb{R}^{n}$, with $k\leq n$. Consider the matrix $U=\left( u_{ij}\right) $ of order $k\times n$ conformed by these vectors. Assume $i_{1}i_{2}\cdots
i_{k}\in I$ and let $U_{i_{1}i_{2}\cdots i_{k}}$ be the matrix of order $k$, conformed by the $k$ columns $i_{1},i_{2},\cdots ,i_{k}$ of $U$. From now on, $U$ always will be a matrix of this kind.

\begin{example}
Consider
\begin{equation*}
U=\left(
\begin{array}{ccccc}
a_{1} & a_{2} & a_{3} & a_{4} & a_{5} \\
b_{1} & b_{2} & b_{3} & b_{4} & b_{5} \\
c_{1} & c_{2} & c_{3} & c_{4} & c_{5}%
\end{array}%
\right) ,
\end{equation*}
in this case, $U_{123}=\left(
\begin{array}{ccc}
a_{1} & a_{2} & a_{3} \\
b_{1} & b_{2} & b_{3} \\
c_{1} & c_{2} & c_{3}%
\end{array}
\right) $ and $U_{245}=\left(
\begin{array}{ccc}
a_{2} & a_{4} & a_{5} \\
b_{2} & b_{4} & b_{5} \\
c_{2} & c_{4} & c_{5}%
\end{array}
\right) $.
\end{example}

Notice that when we choose a particular number of columns of such matrix $U$ exactly corresponds to delete of $U$ the non-selected columns.

\subsubsection*{Step 4}

Consider \begin{equation*}
\left(\mathbb{R}^{n}\right) ^{k}:=\underset{k-\text{times}}{\underbrace{\mathbb{R}^{n}\times\mathbb{R}^{n}\times \cdots \times\mathbb{R}^{n}}}.
\end{equation*}
Now we define the function \textit{exterior product} $\wedge :\left(\mathbb{R}^{n}\right) ^{k}\rightarrow\mathbb{R}^{\binom{n}{k}}$ as follows:
\begin{equation*}
\wedge \left( U\right) =\sum_{i\in I}\left( -1\right) ^{\binom{n}{k}
-r(i)}\det \left( U_{i}\right) e_{\binom{n}{k}-r(i)+1},
\end{equation*}
where $e_{\binom{n}{k}-r(i)+1}$ corresponds to the $\left( \binom{n}{k}-r(i)+1\right) -$th
unity vector of the standard basis of $\mathbb{R}^{\binom{n}{k}}$.

For convenience, we can write
\begin{equation*}
\wedge \left( U\right) =\wedge \left( u_{1},u_{2},\ldots ,u_{k}\right)
=u_{1}\wedge u_{2}\wedge \ldots \wedge u_{k}\text{.}
\end{equation*}

\begin{example}
Consider the vectors $\left( 2,3,-1,5\right),\left( 4,7,2,0\right)\in \mathbb{R}^{4}$. The vector $\left( 2,3,-1,5\right) \wedge \left( 4,7,2,0\right) $ belongs to
$\mathbb{R}
^{\binom{4}{2}}=
\mathbb{R}^{6}$. In this case
\begin{eqnarray*}
I &=&\left\{ 12,13,14,23,24,34\right\} \text{,} \\
U &=&\left(
\begin{array}{cccc}
2 & 3 & -1 & 5 \\
4 & 7 & 2 & 0
\end{array}
\right) ,
\end{eqnarray*}
such that
\begin{eqnarray*}
\wedge \left( U\right) &=&-\left\vert
\begin{array}{cc}
2 & 3 \\
4 & 7%
\end{array}%
\right\vert e_{6}+\left\vert
\begin{array}{cc}
2 & -1 \\
4 & 2%
\end{array}%
\right\vert e_{5}-\left\vert
\begin{array}{cc}
2 & 5 \\
4 & 0%
\end{array}%
\right\vert e_{4}+\left\vert
\begin{array}{cc}
3 & -1 \\
7 & 2%
\end{array}%
\right\vert e_{3}-\left\vert
\begin{array}{cc}
3 & 5 \\
7 & 0%
\end{array}%
\right\vert e_{2} \\
&&+\left\vert
\begin{array}{cc}
-1 & 5 \\
2 & 0%
\end{array}%
\right\vert e_{1} \\
&=&-2e_{6}+8e_{5}+20e_{4}+13e_{3}+35e_{2}-10e_{1} \\
&=&\left( -10,35,13,20,8,-2\right) \text{.}
\end{eqnarray*}
\end{example}

\begin{example}
Consider the canonical basis for $\mathbb{R}^{4}$, that is, $e_{1}=\left( 1,0,0,0\right) $, $e_{2}=\left( 0,1,0,0\right) $, $
e_{3}=\left( 0,0,1,0\right)$ and $e_{1}=\left( 0,0,0,1\right)$. Thus, the exterior product $e_i\wedge e_j$ for $i<j$ is given by
\begin{eqnarray*}
e_{1}\wedge e_{2} &=&-\left( 0,0,0,0,0,1\right)=-e_6\in\mathbb{R}^6, \\
e_{1}\wedge e_{3} &=&\left( 0,0,0,0,1,0\right) =e_5\in\mathbb{R}^6, \\
e_{1}\wedge e_{4} &=&-\left( 0,0,0,1,0,0\right)=-e_4\in\mathbb{R}^6, \\
e_{2}\wedge e_{3} &=&\left( 0,0,1,0,0,0\right)=e_3\in\mathbb{R}^6, \\
e_{2}\wedge e_{4} &=&-\left( 0,1,0,0,0,0\right)=-e_2\in\mathbb{R}^6  , \\
e_{3}\wedge e_{4} &=&\left( 1,0,0,0,0,0\right)=e_1\in\mathbb{R}^6 .
\end{eqnarray*}
As we can see, the set $B=\left\{ e_{1}\wedge e_{2},e_{1}\wedge e_{3},e_{1}\wedge e_{4},e_{2}\wedge
e_{3},e_{2}\wedge e_{4},e_{3}\wedge e_{4}\right\} \subset
\mathbb{R}^{6}$ is one basis for $\mathbb{R}^{6}$.
\end{example}

Notice that in a given basis $B$ for $\mathbb{R}^n$, the exterior product of them taken in sets of $k$-elements without repetition constitutes a basis $B'$ for $\mathbb{R}^{\binom{n}{k}}$.

\subsection{Some properties of $\wedge$}

The following properties are satisfied by $\wedge$:
\begin{enumerate}
\item[1)] If $k=n$, then $\wedge \left( U\right) =\det \left( U\right) $.

\item[2)] If $k=n-1$, then $\wedge $ is the generalized vector product.

\item[3)] If $n$ is even and $k=1$, then $U$ is orthogonal to $\wedge \left(
U\right) $.

\item[4)] $\wedge $ is $k-$linear,
\begin{equation*}
\wedge \left( u_{1},\ldots ,u_{i}+b,\ldots ,u_{k}\right) =\wedge \left(
u_{1},\ldots ,u_{i},\ldots ,u_{k}\right) +\wedge \left( u_{1},\ldots
,b,\ldots ,u_{k}\right) .
\end{equation*}

\item[5)] If $M_{p}$ is a permutation of two rows (being fixed the other ones) of $M$, then $\wedge \left( M_{p}\right) =-\wedge \left(
M\right) $.

\item[6)] If $u_{1},\ldots ,u_{k}$ are $k$ ($\leq n$) linear dependent vectors of $\mathbb{R}^{n}$, then $\wedge \left( u_{1},\ldots
,u_{k}\right) =0\in\mathbb{R}^{\binom{n}{k}}$.
\end{enumerate}

\begin{proof} We proceed according to each item.
\begin{enumerate}
\item[1)]
Assuming $k=n$ we have $\dbinom{n}{k}=\dbinom{n}{n}=1$ \ and $r(i)=1$ (due to $I$ has only one element). For instance
\begin{eqnarray*}
\wedge \left( U\right) &=&\sum_{i\in I}\left( -1\right) ^{\binom{n}{k}%
-r(i)}\det \left( U_{i}\right) e_{\binom{n}{k}-r(i)+1} \\
&=&\det (U_{i}).
\end{eqnarray*}

Trivially we can see that for $\mathbb{R}$, $e_{1}=1$.

\item[2)] Assuming $k=n-1$, we have $\dbinom{n}{k}=\dbinom{n}{n-1}=n$, in this way, $I$ has $n$ elements.
Owing to the symmetry of $\dbinom{n}{k}$, the election of $n-1$ columns of the matrix $U$ corresponds to the elimination of one column of $U$ (precisely the avoided column in the election). In other words, we can see that
\begin{equation*}
U_{i}=X_{n-r(i)+1}
\end{equation*}

where $X_{n-r(i)+1}$ corresponds to the matrix that has been obtained throughout $U$ deleting the ($n-r(i)+1$)-th column such that
\begin{eqnarray*}
\wedge \left( U\right) &=&\sum_{i\in I}\left( -1\right) ^{n-r(i)}\det \left(
U_{i}\right) e_{n-r(i)+1} \\
&=&\sum_{i\in I}\left( -1\right) ^{\left( n-r(i)+1\right) +1}\det \left(
U_{i}\right) e_{n-r(i)+1} \\
&=&\sum_{j=1}^{n}\left( -1\right) ^{j+1}\det \left( X_{j}\right) e_{j} \\
&=&u_{1}\times \ldots \times u_{k}\text{.}
\end{eqnarray*}

\item[3)]  For $n=2p$ and $k=1$, we have $\dbinom{2p}{1}=2p$, thus, the cardinality of $I$ is even and
\begin{equation*}
I=\left\{ 1,2,\ldots ,p,p+1,\ldots ,2p\right\}.
\end{equation*}

Furthermore, $r\left( i\right) =1$. In this way, $\wedge \left( U\right)
\in\mathbb{R}
^{2p}$. On the other hand, considering $U=\left( u_{1},u_{2},\ldots ,u_{2p}\right) $ and $\wedge \left(
U\right) =\left( v_{1},v_{2},\ldots ,v_{2p}\right) $, we obtain
\begin{eqnarray*}
\wedge \left( U\right) &=&\sum_{i\in I}\left( -1\right) ^{2p-i}\det \left(
U_{i}\right) e_{2p-i+1} \\
&=&\sum_{i=1}^{2p}\left( -1\right) ^{i}u_{i}e_{2p-i+1} \\
&=&\left( u_{2p},-u_{2p-1},\ldots ,u_{2},-u_{1}\right),
\end{eqnarray*}

where it follows that $v_{j}=\left( -1\right) ^{j+1}u_{2p-j+1}$ for $j=1,2,\ldots ,2p$. Therefore
\begin{eqnarray*}
U\cdot \wedge \left( U\right) &=&\left( u_{1},u_{2},\ldots
,u_{2p-1},u_{2p}\right) \cdot \left( u_{2p},-u_{2p-1},\ldots
,u_{2},-u_{1}\right) \\
&=&u_{1}u_{2p}-u_{2}u_{2p-1}+\ldots +u_{2p-1}u_{2}-u_{2p}u_{1} \\
&=&\left( u_{1}u_{2p}-u_{2p}u_{1}\right)  +\ldots +(-1)^{p+1}\left(
u_{p}u_{p+1}-u_{p+1}u_{p}\right) \\
&=&0\text{.}
\end{eqnarray*}
\end{enumerate}
Items 4),  5) and 6) can be proven using the properties of the determinant in similar way as the previous ones.
\end{proof}

\section{Reversing operation over $\wedge$}
The \emph{reversing operation} has been applied successfully over rings and vector spaces, see \cite{acchro1,acchro2}. In this section we apply the reversing operation to obtain some results that involve the exterior product with the \emph{palindromic} and \emph{antipalindromic} vectors.
The following results correspond to a generalization of some results presented in \cite{acchro2}.
Consider the matrix $M=\left( m_{i,j}\right) $ of size $m\times n$. The reversing of $M$, denoted by $\overleftarrow{M}$ is given by  $\overleftarrow{M}=(\overleftarrow{m}_{i,j})$, where $\overleftarrow{m}_{i,j}=m_{i,n-j+1}$. We can see that the size of  $\overleftarrow{M}$ is $m\times n$ too. We denote by $J_{n}=\overset{\longleftarrow }{I_{n}}$  the reversing of the identity matrix $I_{n}$ of size $n$. Thus, the following properties can be proven, see \cite{acchro2}.

1. The double reversing:
\begin{equation*}
\overleftarrow{\overleftarrow{M}}=(\overleftarrow{\overleftarrow{m}}_{i,j})=(%
\overleftarrow{m}_{i,n-j+1})=\left( m_{i,n-(n-j+1)+1}\right) =(m_{i,j})=M,
\end{equation*}%

2. $\overleftarrow{M}=MJ_{n}$

3. $J_{n}J_{n}=I_{n}$.
The following definitions were introduced in \cite{acchro2}. A matrix $M$ is called palindromic whether it satisfies $\overleftarrow{M}=M$, in the same way, a matrix $M$ is called antipalindromic whether it satisfies $\overleftarrow{M}=-M$. In particular, for $m=1$, we get palindromic and antipalindromic vectors respectively.

As we can see, the palindromic matrix $M$ satisfies that $m_{i,j}=m_{i,n-j+1}$ and for instance $M$
has at least $\dfrac{n}{2}$ pair of equal columns whether $n$ is even (as well $\dfrac{n}{2}-1$ when $n$ is odd). This fact lead us to the following result.

\begin{proposition}\label{prop1}
$\det (J_{n})=\left\{
\begin{array}{l}
(-1)^{n/2},\, n=2k,\,k\in\mathbb{Z}^+\\
(-1)^{\frac{n+3}{2}},\, n=2k-1,\,k\in\mathbb{Z}^+
\end{array}%
\right. $.
\end{proposition}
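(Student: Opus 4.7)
The key observation is that $J_n$ is a permutation matrix, namely the matrix of the ``reversal'' permutation $\sigma(i)=n-i+1$. Equivalently, $J_n$ is obtained from the identity $I_n$ by completely reversing the order of its rows (or its columns). Since $\det(I_n)=1$ and swapping two rows of a matrix multiplies its determinant by $-1$, the strategy is simply to count how many row-swaps are needed to turn $I_n$ into $J_n$, and then read off the sign.

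To carry out the count, I pair up the rows: I swap row $1$ with row $n$, row $2$ with row $n-1$, and so on, pairing row $i$ with row $n-i+1$ for each $i$ with $i<n-i+1$. The two cases for $n$ naturally split here. When $n=2k$ is even, every row has a distinct partner, giving exactly $k=n/2$ disjoint transpositions and hence
\begin{equation*}
\det(J_n)=(-1)^{n/2}.
\end{equation*}
When $n=2k-1$ is odd, the middle row (row $k$) is fixed by the reversal and only the remaining $n-1$ rows pair up, yielding $k-1=(n-1)/2$ transpositions and $\det(J_n)=(-1)^{(n-1)/2}$.

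The final step is a small bookkeeping check to reconcile $(-1)^{(n-1)/2}$ with the form $(-1)^{(n+3)/2}$ stated in the proposition. Since $(n+3)/2-(n-1)/2=2$, the two exponents have the same parity, so the two expressions are equal. I would record this explicitly to avoid any impression of a typo.

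I do not foresee a genuine obstacle: the only mildly delicate point is making sure the row-swaps are genuinely disjoint (so that their signs multiply without cancellation), which is automatic because distinct values of $i\le\lfloor n/2\rfloor$ produce distinct unordered pairs $\{i,n-i+1\}$. An equivalent and equally short alternative, if a more permutation-theoretic flavor is preferred, is to invoke the formula $\det(P_\sigma)=\operatorname{sgn}(\sigma)$ for a permutation matrix and compute $\operatorname{sgn}(\sigma)$ from the same transposition decomposition; this would be the natural fallback if the row-swap bookkeeping feels too informal.
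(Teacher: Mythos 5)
Your proof is correct, but it takes a genuinely different route from the paper. The paper argues by induction on $n$: expanding $\det(J_{n+1})$ along its first row (which has a single $1$ in the last column) gives the recursion $\det(J_{n+1})=(-1)^{1+(n+1)}\det(J_n)$, and the two stated formulas are then verified in the even and odd cases of the inductive step. You instead recognize $J_n$ as the permutation matrix of the reversal $\sigma(i)=n-i+1$ and compute its sign directly as a product of $\lfloor n/2\rfloor$ disjoint transpositions $\{i,\,n-i+1\}$, giving $(-1)^{n/2}$ for even $n$ and $(-1)^{(n-1)/2}$ for odd $n$; your parity check that $(-1)^{(n-1)/2}=(-1)^{(n+3)/2}$ is the right way to reconcile this with the statement (the paper itself silently uses the same parity identity when it writes $(-1)^{(n+5)/2}=(-1)^{(n+1)/2}$ at the end of its odd case). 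Your argument is shorter and explains \emph{why} the sign is what it is in one step, at the cost of invoking the sign of a permutation; the paper's induction stays entirely within cofactor expansions, which fits its stated didactic aim, but requires carrying the two-case formula through the inductive step. Both are complete and correct.
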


\begin{proof} We proceed by induction over $n$.
Assuming $n=1$, we have that $I_{n}=1$ and $J_{n}=1$, thus $\det \left( J_{n}\right) =1=\left(
-1\right) ^{\frac{1+3}{2}}$. Let the proposition be true for $n$, thus we will prove that is also true for $n+1$. We start considering that $n$ is even, so we get
\begin{eqnarray*}
\det \left( J_{n+1}\right)  &=&1\left( -1\right) ^{1+\left( n+1\right) }\det
\left( J_{n}\right)  \\
&=&\left( -1\right) ^{n+2}\left( -1\right) ^{\frac{n}{2}} \\
&=&\left( -1\right) ^{\frac{n}{2}}=\left( -1\right) ^{\frac{(n+1)+3}{2}}.
\end{eqnarray*}
Now, considering $n$ as an positive odd integer, we have
\begin{eqnarray*}
\det \left( J_{n+1}\right)  &=&1\left( -1\right) ^{1+\left( n+1\right) }\det
\left( J_{n}\right)  \\
&=&\left( -1\right) ^{n+2}(-1)^{\frac{n+3}{2}} \\
&=&\left( -1\right) (-1)^{\frac{n+3}{2}} \\
&=&\left( -1\right) ^{\frac{n+5}{2}}=\left( -1\right) ^{\frac{n+1}{2}}.
\end{eqnarray*}
\end{proof}
Now, we study the relationship between the exterior product $\wedge$ and the reversing operation. We start considering $k=n-1$, that is, the generalized vector product over $\mathbb{R}^{n}$. Consider $M_{1}=\left(
m_{11},m_{12},\ldots ,m_{1n}\right) ,\ldots ,M_{n-1}=\left( m_{\left(
n-1\right) ,1},a_{\left( n-1\right) ,2},\ldots ,m_{\left( n-1\right)
,n}\right) ,$ $n-1$ vectors in $\mathbb{R}^{n}$. The generalized vector product is given by the equation \eqref{vectprod}, therefore we obtain
\begin{equation}
\times \left( M_{1},\text{ }M_{2},\text{ }\ldots ,\text{ }M_{n-1}\right)
=\sum_{k=1}^{n}\left(
-1\right) ^{1+k}\det \left( M^{(k)}\right) e_{k},
\end{equation}

being $e_{k}$ the $k$-th element of the canonical basis for $\mathbb{R}^{n}$ and $M^{\left( k\right) }$ is the square matrix obtained after the deleting of the $k$-th column of the matrix $M=\left( m_{ij}\right) _{\left(
n-1\right) \times n}$. The matrix $M^{\left( k\right) }$ is a square matrix of size $\left(
n-1\right) \times \left( n-1\right) $ and is given by
\begin{equation}
M^{(k)}=\left( m_{i,j}^{(k)}\right) =\left\{
\begin{array}{l}
m_{i,j}\text{, si }j<k \\
m_{i,j+1}\text{ si }j\geq k%
\end{array}%
\right. .
\end{equation}

\begin{proposition}\label{prop2}
If we consider $M=\left( m_{ij}\right) _{\left( n-1\right) \times n}$, then $\overleftarrow{M}^{\left( k\right) }=M^{\left( n-k+1\right) }J_{n-1}$, for $1\leq k\leq n$.
\end{proposition}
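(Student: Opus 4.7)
The plan is to verify the stated identity entry by entry, since both sides are matrices of size $(n-1)\times(n-1)$. The core observation is that column deletion and column reversal interact in a controlled way, and a careful substitution will make the two sides coincide; no combinatorial or determinantal input is needed beyond the definitions already given.

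First I would expand the left-hand side. By the definition of column deletion, the $(i,j)$-entry of $\overleftarrow{M}^{(k)}$ equals $\overleftarrow{m}_{i,j}$ when $j<k$ and $\overleftarrow{m}_{i,j+1}$ when $j\geq k$. Substituting the reversing formula $\overleftarrow{m}_{i,\ell}=m_{i,n-\ell+1}$ then converts these to $m_{i,n-j+1}$ in the case $j<k$ and to $m_{i,n-j}$ in the case $j\geq k$.

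Next I would expand the right-hand side. Applying property 2 of reversing to the $(n-1)\times(n-1)$ matrix $M^{(n-k+1)}$ gives $M^{(n-k+1)}J_{n-1}=\overleftarrow{M^{(n-k+1)}}$, so the $(i,j)$-entry on the right is the $(i,n-j)$-entry of $M^{(n-k+1)}$. Using the definition of column deletion with the shifted column index $\ell=n-j$, I would translate the conditions $\ell<n-k+1$ and $\ell\geq n-k+1$ into $j\geq k$ and $j<k$ respectively, yielding $m_{i,n-j}$ in the first case and $m_{i,n-j+1}$ in the second. These match the left-hand side in both regimes, which finishes the argument.

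The only delicate point is the index bookkeeping: the inequality cut-off lives on the shifted variable $n-j$ rather than on $j$, so one must remember to flip the direction of the inequality when translating back, and one must also check the boundary cases $k=1$ and $k=n$ to confirm the formula degenerates correctly (in those extreme cases one of the two regimes is empty). Apart from that, the argument is a mechanical unwinding of definitions and I expect no deeper obstacle.
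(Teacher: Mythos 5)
Your proposal is correct and follows essentially the same route as the paper's proof: both sides are expanded entrywise, the left via the deletion-then-reverse substitution $\overleftarrow{m}_{i,\ell}=m_{i,n-\ell+1}$, and the right by observing that multiplication by $J_{n-1}$ replaces the column index $j$ with $n-j$ in $M^{(n-k+1)}$, after which the two case distinctions ($j<k$ versus $j\geq k$) line up exactly as you describe. The index bookkeeping you flag is precisely the only content of the paper's argument, and you handle it correctly.
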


\begin{proof}
We know that $\overleftarrow{M}=MJ_{n}$, that is, $\left( \overleftarrow{m}_{i,j}\right)
=\left( m_{i,n-j+1}\right)$,  $1\leq j\leq n$. Therefore
\begin{eqnarray*}
\overleftarrow{M}^{\left( k\right) } &=&\left( \overleftarrow{m}%
_{i,j}^{\left( k\right) }\right) =\left\{
\begin{array}{l}
\overleftarrow{m}_{i,j}\text{, si }j<k \\
\overleftarrow{m}_{i,j+1}\text{ si }j\geq k%
\end{array}%
\right.  \\
&=&\left\{
\begin{array}{l}
m_{i,n-j+1}\text{, si }j<k \\
m_{i,n-(j+1)+1}\text{ si }j\geq k%
\end{array}%
\right. .
\end{eqnarray*}
On the other hand,
\begin{equation}
M^{\left( n-k+1\right) }=\left( m_{i,j}^{\left( n-k+1\right) }\right)
=\left\{
\begin{array}{l}
m_{i,j}\text{, si }j<n-k+1 \\
m_{i,j+1}\text{ si }j\geq n-k+1%
\end{array}%
\right. .
\end{equation}
Now, we obtain
\begin{eqnarray*}
M^{\left( n-k+1\right) }J_{n-1} &=&\left( m_{i,(n-1)-j+1}^{\left(
n-k+1\right) }\right) =\left( m_{i,n-j}^{\left( n-k+1\right) }\right)  \\
&=&\left\{
\begin{array}{l}
m_{i,\left( n-j\right) }\text{, si }n-j<n-k+1 \\
m_{i,\left( n-j\right) +1}\text{ si }n-j\geq n-k+1%
\end{array}%
\right.  \\
&=&\left\{
\begin{array}{l}
m_{i,\left( n-j\right) }\text{, si }j>k-1 \\
m_{i,\left( n-j\right) +1}\text{ si }j\leq k-1%
\end{array}%
\right.  \\
&=&\left\{
\begin{array}{l}
m_{i,n-j}\text{, si }j\geq k \\
m_{i,n-j+1}\text{ si }j<k%
\end{array}%
\right. =\overleftarrow{M}^{\left( k\right) }.
\end{eqnarray*}
\end{proof}
The following proposition is a generalization of one result presented in \cite{acchro2}, where was analyzed the reversing of the vector product in $\mathbb{R}^3$.

From now on, for suitability we denote $M=\left( M_{1},M_{2},\ldots ,M_{n-1}\right) $, i.e., $M$ is the matrix that has as rows the vectors $M_{1},$ $M_{2},$ $\ldots ,$ $M_{n-1}
$, thus we obtain $$\overleftarrow{M}=\left( \overleftarrow{M}_{1},\text{ }%
\overleftarrow{M}_{2},\text{ }\ldots ,\text{ }\overleftarrow{M}_{n-1}\right)
.$$ In the same way, for suitability we write $$\mathfrak{M}=\times \left( \overleftarrow{M}_{1},\text{ }\overleftarrow{M}_{2},\text{ }%
\ldots ,\text{ }\overleftarrow{M}_{n-1}\right).$$ 
\begin{proposition}\label{prop3} The generalized vector product of $\overleftarrow{M}_{i}$, being $1\leq i\leq n-1$, satisfies
$$\mathfrak{M}=\left\{
\begin{array}{l}
\left( -1\right) ^{\frac{3n}{2}}\left( \overleftarrow{\times \left( M_{1},%
\text{ }M_{2},\text{ }\ldots ,\text{ }M_{n-1}\right) }\right),\, n=2k,\\
\left( -1\right) ^{\frac{3n+1}{2}}\left( \overleftarrow{\times \left( M_{1},%
\text{ }M_{2},\text{ }\ldots ,\text{ }M_{n-1}\right) }\text{ }\right) ,\, n=2k-1
\end{array}%
\right. ,$$ being $k\in \mathbb{Z}^+$.
\end{proposition}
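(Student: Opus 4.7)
The plan is to compute $\mathfrak{M}$ directly from the definition of the generalized vector product, applied to the reversed rows, and then use Proposition~\ref{prop2} to turn the minors of $\overleftarrow{M}$ into minors of $M$ times $\det(J_{n-1})$. After a reindexing of the summation, one recognizes the remaining sum as the reversing of $\times(M_{1},\dots,M_{n-1})$, up to a global sign $(-1)^{n+1}$. The rest is a parity check using Proposition~\ref{prop1}, split into the cases $n$ even and $n$ odd.

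First I would write
\begin{equation*}
\mathfrak{M}=\sum_{k=1}^{n}(-1)^{1+k}\det\bigl(\overleftarrow{M}^{(k)}\bigr)e_{k},
\end{equation*}
and apply Proposition~\ref{prop2} to obtain $\det(\overleftarrow{M}^{(k)})=\det(J_{n-1})\det(M^{(n-k+1)})$. Pulling out $\det(J_{n-1})$ yields
\begin{equation*}
\mathfrak{M}=\det(J_{n-1})\sum_{k=1}^{n}(-1)^{1+k}\det\bigl(M^{(n-k+1)}\bigr)e_{k}.
\end{equation*}
Then I would reindex with $j=n-k+1$, noting that $(-1)^{1+k}=(-1)^{n-j+2}=(-1)^{n+1}(-1)^{j+1}$ and that $e_{k}=e_{n-j+1}$, to rewrite the sum as
\begin{equation*}
\mathfrak{M}=(-1)^{n+1}\det(J_{n-1})\sum_{j=1}^{n}(-1)^{j+1}\det\bigl(M^{(j)}\bigr)e_{n-j+1}.
\end{equation*}
The key observation is that if $\times(M_{1},\dots,M_{n-1})=(v_{1},\dots,v_{n})$ with $v_{j}=(-1)^{1+j}\det(M^{(j)})$, then $\overleftarrow{\times(M_{1},\dots,M_{n-1})}=\sum_{j=1}^{n}v_{j}\,e_{n-j+1}$, which matches the inner sum exactly. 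Hence
\begin{equation*}
\mathfrak{M}=(-1)^{n+1}\det(J_{n-1})\,\overleftarrow{\times(M_{1},\dots,M_{n-1})}.
\end{equation*}

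The remaining, and somewhat tedious, step is to verify that $(-1)^{n+1}\det(J_{n-1})$ equals the stated signs $(-1)^{3n/2}$ or $(-1)^{(3n+1)/2}$. For $n=2k$ even, $n-1$ is odd so Proposition~\ref{prop1} gives $\det(J_{n-1})=(-1)^{(n+2)/2}$ and $(-1)^{n+1}=-1$; multiplying and adding $n$ (which is even) to the exponent shows the product equals $(-1)^{3n/2}$. For $n=2k-1$ odd, $n-1$ is even so $\det(J_{n-1})=(-1)^{(n-1)/2}$ and $(-1)^{n+1}=1$; adding $n+1$ (which is even) to the exponent gives $(-1)^{(3n+1)/2}$. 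The main obstacle is simply keeping the parities straight; no deeper idea is required beyond a careful application of Propositions~\ref{prop1} and~\ref{prop2}.
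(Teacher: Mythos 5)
Your proof is correct and follows essentially the same route as the paper: expand $\mathfrak{M}$ from the definition, use Proposition~\ref{prop2} to replace $\det(\overleftarrow{M}^{(k)})$ by $\det(J_{n-1})\det(M^{(n-k+1)})$, reindex to extract the factor $(-1)^{n+1}\det(J_{n-1})$ in front of the reversed vector product, and finish with the parity analysis via Proposition~\ref{prop1}. The only cosmetic difference is that the paper writes the reversal of the inner sum as right-multiplication by $J_{n}$, while you identify it componentwise; these are the same computation.
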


\begin{proof}
For suitability we denote $M=\left( M_{1},M_{2},\ldots ,M_{n-1}\right) $, i.e., $M$ is the matrix that has as rows the vectors $M_{1},$ $M_{2},$ $\ldots ,$ $M_{n-1}
$, thus we obtain $$\overleftarrow{M}=\left( \overleftarrow{M}_{1},\text{ }%
\overleftarrow{M}_{2},\text{ }\ldots ,\text{ }\overleftarrow{M}_{n-1}\right)
.$$ In the same way, for suitability we write $$\mathfrak{M}=\times \left( \overleftarrow{M}_{1},\text{ }\overleftarrow{M}_{2},\text{ }%
\ldots ,\text{ }\overleftarrow{M}_{n-1}\right).$$ Now, applying the generalized vector product we obtain 
\begin{eqnarray*}
\mathfrak{M} &=&\sum_{k=1}^{n}\left(
-1\right) ^{k+1}\det \left( \overleftarrow{M}^{\left( k\right) }\right) e_{k}
\\
&=&\sum_{k=1}^{n}\left( -1\right) ^{k+1}\det \left( M^{\left( n-k+1\right)
}J_{n-1}\right) e_{k} \\
&=&\sum_{k=1}^{n}\left( -1\right) ^{k+1}\det \left( M^{\left( n-k+1\right)
}J_{n-1}\right) e_{k} \\
&=&\sum_{k=1}^{n}\left( -1\right) ^{k+1}\det \left( M^{\left( n-k+1\right)
}\right) \det \left( J_{n-1}\right) e_{k} \\
&=&\det \left( J_{n-1}\right) \sum_{k=1}^{n}\left( -1\right) ^{n-k}\det
\left( M^{\left( k\right) }\right) e_{n-k+1} \\
&=&\left( -1\right) ^{n+1}\det \left( J_{n-1}\right) \sum_{k=1}^{n}\left(
-1\right) ^{k+1}\det \left( M^{\left( k\right) }\right) e_{n-k+1} \\
&=&\left( -1\right) ^{n+1}\det \left( J_{n-1}\right) \left(
\sum_{k=1}^{n}\left( -1\right) ^{k+1}\det \left( M^{\left( k\right) }\right)
e_{k}\right) J_{n} \\
&=&\left( -1\right) ^{n+1}\det \left( J_{n-1}\right) \left( \overleftarrow{%
\times \left( M_{1},\text{ }M_{2},\text{ }\ldots ,\text{ }M_{n-1}\right) }%
\right)
\end{eqnarray*}

and for instance,%
\begin{eqnarray*}
\mathfrak{M}  &=&\left\{
\begin{array}{l}
\left( -1\right) ^{n+1}(-1)^{\frac{\left( n-1\right) +3}{2}}\left(
\overleftarrow{\times \left( M_{1},\text{ }M_{2},\text{ }\ldots ,\text{ }%
M_{n-1}\right) }\right),\, n=2k\\
\left( -1\right) ^{n+1}\left( -1\right) ^{\frac{n-1}{2}}\left(
\overleftarrow{\times \left( M_{1},\text{ }M_{2},\text{ }\ldots ,\text{ }%
M_{n-1}\right) }\text{ }\right),\, n=2k-1
\end{array}%
\right.  \\
&=&\left\{
\begin{array}{l}
\left( -1\right) ^{\frac{3n}{2}}\left( \overleftarrow{\times \left( M_{1},%
\text{ }M_{2},\text{ }\ldots ,\text{ }M_{n-1}\right) }\right),\, n=2k\\
\left( -1\right) ^{\frac{3n+1}{2}}\left( \overleftarrow{\times \left( M_{1},%
\text{ }M_{2},\text{ }\ldots ,\text{ }M_{n-1}\right) }\text{ }\right), n=2k-1
\end{array}%
\right. .
\end{eqnarray*}
\end{proof}

If $M$ is a palindromic matrix, then the minors $M^{\left( k\right) }$ have at least $\dfrac{n}{2}-1$ pair of equal columns when $n$ is even and respectively $\dfrac{n-1}{2}-1$ when $n$ is odd. This implies that for  $n\geq 4$, the minors have at least one pair of equal columns and for instance $\det \left( M^{\left(
k\right) }\right) $ $=0$ for all $1\leq k\leq n$ and so
\begin{equation}
\times \left( M_{1},\text{ }M_{2},\text{ }\ldots ,\text{ }M_{n-1}\right)
=\mathbf{0}\in\mathbb{R}^{n}.
\end{equation}

This means that the generalized vector product of $(n-1)$ palindromic vectors in  $\mathbb{R}^{n}$ is interesting when $1\leq n\leq 3$. The same result is obtained when we assume $M$ as an antipalindromic matrix.

\section*{Final Remarks}
When we consider the exterior product for $k\neq n-1$, the previous results cannot be applied due to in general they are not true. To illustrate it, we present the following example.
\begin{example}
Consider the vectors $\left( 2,3,-1,5\right) $ and $\left( 4,7,2,0\right)
$ in $\mathbb{R}^{4}$. In this case,
\begin{equation*}
M=\left(
\begin{array}{cccc}
2 & 3 & -1 & 5 \\
4 & 7 & 2 & 0%
\end{array}%
\right) \text{ \ and \ }\overleftarrow{M}=%
\left(\begin{array}{cccc}
5 & -1 & 3 & 2 \\
0 & 2 & 7 & 4%
\end{array}\right)%
\end{equation*}%
As we have seen before, 
\begin{equation*}
\left( 2,3,-1,5\right) \wedge \left( 4,7,2,0\right) =\left(
-10,35,13,20,8,-2\right) .
\end{equation*}
Therefore
\begin{eqnarray*}
\left( 5,-1,3,2\right) \wedge \left( 0,2,7,4\right)  &=& \\
&&-\left\vert
\begin{array}{cc}
5 & -1 \\
0 & 2%
\end{array}%
\right\vert e_{6}+\left\vert
\begin{array}{cc}
5 & 3 \\
0 & 7%
\end{array}%
\right\vert e_{5}-\left\vert
\begin{array}{cc}
5 & 2 \\
0 & 4%
\end{array}%
\right\vert e_{4}+ \\
&&+\left\vert
\begin{array}{cc}
-1 & 3 \\
2 & 7%
\end{array}%
\right\vert e_{3}-\left\vert
\begin{array}{cc}
-1 & 2 \\
2 & 4%
\end{array}%
\right\vert e_{2}+\left\vert
\begin{array}{cc}
3 & 2 \\
7 & 4%
\end{array}%
\right\vert e_{1} \\
&=&-(10)e_{6}+(35)e_{5}-(20)e_{4}+(-7-6)e_{3}-\\ &&(-4-4)e_{2}+(12-14)e_{1} \\
&=&(-2,8,-13,-20,35,-10).
\end{eqnarray*}
\end{example}
Thus, in general, the exterior product does not satisfies
\begin{equation*}
\bigwedge U=\left( -1\right) ^{p}\bigwedge \overleftarrow{U}, \quad \text{for some  }p\in\mathbb{Z}.
\end{equation*}
Finally, although this paper is presented in a didactic way, there are original results corresponding to the relations between the reversing operation and the generalized vector product.
\section*{Acknowledgements}
The first author is partially supported by MICIIN/FEDER grant number MTM2009-
06973 and by Universidad del Norte. The second author is supported by Pontificia Universidad Javeriana. The third author is partially supported by Universidad Sergio Arboleda. The authors thanks to the anonymous referees by their useful comments and suggestions.
\newpage

{\small \rightline{\sc Primitivo Acosta-Hum\'anez}
\rightline{\sc
Departamento de Matem\'aticas y Estadística}
\rightline{\sc Universidad del Norte} \rightline{\sc Barranquilla, Colombia}}

{\small \smallskip \rightline{{\it e-mail:} \tt pacostahumanez@uninorte.edu.co}}
\bigskip

{\small \rightline{\sc Mois\'es Aranda}
\rightline{\sc
Departamento de Matem\'aticas}
\rightline{\sc Pontificia Universidad
Javeriana} \rightline{\sc Bogot\'a, Colombia}}

{\small \smallskip \rightline{{\it e-mail:} \tt maranda@javeriana.edu.co}}
\bigskip

{\small \rightline{\sc Reinaldo Nunez}
\rightline{\sc Escuela de
Matem\'aticas} \rightline{\sc Universidad Sergio Arboleda}
\rightline{\sc
Bogot\'a, Colombia}}

{\small \smallskip \rightline{{\it e-mail:} \tt reinaldo.nunez@usa.edu.co}%
\label{final} }

\end{document}